\documentclass[reqno]{amsart}

\usepackage{enumerate}
\usepackage{tabto}
\usepackage[mathscr]{euscript}
\usepackage{xcolor}
\usepackage{layout}
\usepackage{fancyhdr}
\usepackage{array}
\usepackage{amsfonts}
\usepackage{amsmath}
\usepackage{amssymb}
\usepackage{mathtools}
\usepackage{graphicx}
\usepackage{bm}
\usepackage{enumitem}
\usepackage{caption} 
\usepackage{color}
\usepackage{kotex}
\usepackage{csquotes}
\usepackage{bookmark}
\usepackage{float}
\usepackage{multirow}
\usepackage[square,numbers,sort&compress]{natbib}
\usepackage{hyperref}
\hypersetup{colorlinks=true,linkcolor=blue,citecolor=red}
\allowdisplaybreaks

\def\XXint#1#2#3{{\setbox0=\hbox{$#1{#2#3}{\int}$ }
\vcenter{\hbox{$#2#3$ }}\kern-.6\wd0}}

\newtheorem{theorem}{Theorem}[section]
\newtheorem{lemma}[theorem]{Lemma}

\newtheorem{proposition}[theorem]{Proposition}
\theoremstyle{definition}
\newtheorem{definition}[theorem]{Definition}
\newtheorem{remark}[theorem]{Remark}

\numberwithin{equation}{section}

\newcommand{ \mc }{ \mathcal }
\newcommand{ \mr }{ \mathbb{R} }

\newcommand{ \dist }{ \operatorname{dist} }
\newcommand{ \loc }{ \operatorname{loc} }

\newcommand{\iints}[1]{{\int\hspace{-0.28cm}\int_{#1}}}

\begin{document}
\title[absence of the Lavrentiev phenomenon]{absence of the Lavrentiev phenomenon for a general class of parabolic double phase problems}

\author{Youngchae Kim}\address{Department of Mathematics, Kyungpook National University, Daegu, 41566, Republic of Korea} \email{rjseka5241@knu.ac.kr} \author{Jehan Oh}\address{Department of Mathematics, Kyungpook National University, Daegu, 41566, Republic of Korea} \email{jehan.oh@knu.ac.kr}

\subjclass{Primary 49N60; Secondary 35K55, 46E30}
\date{\today.}
\keywords{Lavrentiev phenomenon, non-standard growth conditions, Orlicz spaces, parabolic double phase functionals}
\thanks{Youngchae Kim is supported by the National Research Foundation of Korea (NRF) grant funded by the Korea government [Grant No. RS-2025-25433309].
Jehan Oh is supported by the National Research Foundation of Korea (NRF) grant funded by the Korea government [Grant Nos. RS-2025-00555316 and RS-2025-25415411].}

\begin{abstract}
In this paper, we prove the absence of the Lavrentiev phenomenon for a general class of parabolic double phase functionals with Orlicz growth. The energy density is given by
$$
G(|Dw|)+a(x,t)H(|Dw|),
$$
where $G$ and $H$ are Young functions satisfying the $\Delta_2$ and $\nabla_2$ conditions with $G\prec H$, and $a(\cdot)$ is a continuous nonnegative coefficient. Under suitable balance conditions between the growth gap of $G$ and $H$ and the modulus of continuity of $a(\cdot)$, we show that every finite-energy map can be approximated locally by smooth functions without loss of energy. The result extends the known parabolic double phase theory from power type growth to a broad Young function framework and identifies the natural space-time Orlicz energy class for the problem.
\end{abstract}
\maketitle

\section{Introduction}

In this paper, we investigate the absence of the Lavrentiev phenomenon for a general class of parabolic double phase functionals given by
\begin{equation}    \label{eq: main functional}
	L^1(0,T;W^{1,1}(\Omega))\ni w\to \mc{P}(w,\Omega_T)\coloneq\iints{\Omega_T}[G(|Dw|)+a(z)H(|Dw|)]\,dz,
\end{equation}
where $\Omega_T \coloneq \Omega\times(0,T)$ is a parabolic cylinder with a bounded open set $\Omega\subset \mr^n$ and $T>0$. Here, $G,H:[0,\infty)\to[0,\infty)$ are Young functions with $G,H\in\Delta_2\cap\nabla_2$ and $G\prec H$ (see Section \ref{subsec: Young functions}) and $a(\cdot):\Omega_T\to[0,\infty)$ is a continuous function with a modulus of continuity $\omega$. The Lavrentiev phenomenon is one of the important topics in the calculus of variations. For a functional $\mc{G}$ defined on a function space $X$ and for a dense subspace $Y$ of $X$, if
$$
\inf_X\mc{G}<\inf_Y\mc{G},
$$
then we say that the Lavrentiev phenomenon occurs. For instance, when $G:\Omega\times\mr^n\to\mr$ is a Carath\'{e}odory function satisfying non-standard growth conditions (in Marcellini's terminology \cite{Marcellini1989,Marcellini2020,Marcellini1991}) of $(p,q)$-type
$$
|\xi|^p\lesssim G(x,\xi)\lesssim |\xi|^q+1,\quad 1<p<q,
$$
for non-autonomous functionals of the type
$$
u\mapsto\mc{G}_0(u,\Omega)\coloneq\int_\Omega G(x,Du)\,dx,
$$
the Lavrentiev phenomenon occurs in a ball $B\subset\subset\Omega$:
$$
\inf_{u\in u_0+W_0^{1,p}(B)}\mc{G}_0(u,B)<\inf_{u\in u_0+W_0^{1,p}(B)\cap W_{\loc}^{1,q}(B)}\mc{G}_0(u,B),
$$
where $u_0$ is a boundary data with proper regularity. According to \cite{Mingione2021}, it not only represents an obstruction to regularity, but also indicates that the same assumptions ensuring a priori regularity estimates for minimizers can be used to prove the absence of the Lavrentiev phenomenon. This relationship can be reversed, so that the absence of the Lavrentiev phenomenon can be exploited to derive regularity estimates. In this sense, we say that the Lavrentiev phenomenon does not occur if for any $w\in W^{1,p}(B)$, there exists a sequence of functions $\left\{ u_k \right\}$ in a smoother space than $W^{1,p}(B)$ such that
$$
\mc{G}_0(u_k,B)\to\mc{G}_0(u,B)
$$
for any ball $B\subset\subset\Omega$.

\medskip The elliptic double phase equation is given by
$$
-\operatorname{div}(|Du|^{p-2}Du+a(x)|Du|^{q-2}Du)=0\quad\text{in }\Omega
$$
where $\Omega\subset\mr^n$ is open, $1<p<q$ and $0\leq a(\cdot)\in C^\alpha(\Omega)$ with $\alpha\in(0,1]$. This is motivated by strongly anisotropic materials. It was first introduced by Zhikov in \cite{Zhikov1986,Zhikov1993,Zhikov1995,Zhikov1997} and the corresponding functional
$$
W^{1,1}(\Omega)\ni w\mapsto\int_\Omega[|Dw|^p+a(x)|Dw|^q]\,dx
$$
is an example of the Lavrentiev phenomenon, see \cite{Esposito2004,Fonseca2004,Zhikov1995,Zhikov1997}. The Lavrentiev phenomenon for elliptic double phase problems has been actively studied. For instance, \cite{Esposito2004} and \cite{Colombo2015} show that the gap condition
$$
\frac{q}{p}\leq 1+\frac{\alpha}{n}
$$
is equivalent to the absence of the Lavrentiev phenomenon. Moreover, the assumptions
$$
u\in L^\infty(\Omega)\quad\text{and}\quad q\leq p+\alpha
$$
and
$$
u\in W^{1,q}(\Omega)\quad\text{and}\quad q\leq p+\alpha\max\left\{ 1,\frac{p}{n} \right\}
$$
also ensure absence of the Lavrentiev phenomenon, see \cite{Colombo2015a,Esposito2004,Fonseca2004,Bulicek2022}. In addition, regularity theory as well as the Lavrentiev phenomenon for elliptic double phase problems has been extensively studied. Furthermore, elliptic double phase problems with two modulating coefficients and generalized double phase problems have also been widely investigated. For instance, local H\"{o}lder continuity for weak solutions and their gradients are derived in \cite{Baroni2015,Baroni2018,Ok2017}, while Calder\'{o}n-Zygmund estimates are established in \cite{Byun2017,Baasandorj2020,Colombo2016,DeFilippis2019,Byun2021a,Kim2026}. For further results on regularity of elliptic double phase problems, see \cite{Ok2020,Byun2020,Byun2021,DeFilippis2023,Haestoe2022,Haestoe2022a,Kim2024a,Kim2025a}.

On the other hand, the parabolic double phase equation is given by
$$
u_t-\operatorname{div}(|Du|^{p-2}Du+a(x,t)|Du|^{q-2}Du)=0\quad\text{in }\Omega_T\coloneq \Omega\times(0,T),
$$
where $n\geq 2$, $\frac{2n}{n+2}<p<q<\infty$, $\Omega$ is a bounded open subset of $\mr^n$, and $T>0$. Here, $a(\cdot):\Omega_T\to[0,\infty)$ is a function in $C^{\alpha,\frac{\alpha}{2}}(\Omega_T)$ which means that $a(\cdot)\in  L^\infty(\Omega_T)$ and there is some constant $[a]_{\alpha,\frac{\alpha}{2}}$ such that
$$
|a(x,t)-a(y,s)|\leq[a]_{\alpha,\frac{\alpha}{2}}\max\left\{ |x-y|^\alpha,|t-s|^\frac{\alpha}{2} \right\}
$$
for any $(x,t),(y,s)\in\Omega_T$. Research on parabolic double phase problems has only recently become active. In particular, while some progress has been made on regularity issues such as higher integrability (see, e.g., \cite{Wontae2023,Wontae2024,Chlebicka2025a,Kim2025b,Kim2026a}), results concerning the Lavrentiev phenomenon remain limited, with essentially only \cite{Chlebicka2019a,Chlebicka2021,BYJ2026} available. In \cite{BYJ2026}, it is proved that for $w\in L^p(0,T;W^{1,p}(\Omega))$, if one of the conditions
$$
\begin{aligned}
	& (1)\quad q\leq p+\frac{p\alpha}{n+2}, \\
	& (2)\quad w\in L^\infty(\Omega_T) \quad\text{and}\quad q\leq p+\max\left\{ \frac{p\alpha}{n+2},\alpha \right\}, \\
	& (3)\quad w\in C(0,T;L^s(\Omega)),\quad s\geq 2,\quad\text{and}\quad q\leq p+\max\left\{ \frac{s\alpha}{n+s},\frac{p\alpha}{n+2} \right\}
\end{aligned}
$$
holds, then the Lavrentiev phenomenon does not occur. Also, the absence of the Lavrentiev phenomenon for parabolic equations with anisotropic Musielak-Orlicz spaces is established in \cite{Chlebicka2019a} and \cite{Chlebicka2021}.

\medskip In the study of problems with non-standard growth, such as double phase problems, the choice of the underlying function space is determined not only by formal analogies with the power-type case, but more importantly by the structure of the energy functional and the approximation properties required to investigate the Lavrentiev phenomenon.

At first glance, a natural extension of the classical assumption
$$
w\in L^p(0,T;W^{1,p}(\Omega))
$$
would be to consider
$$
w\in L^G(0,T;W^{1,G}(\Omega)).
$$
However, this choice turns out to be inappropriate in the non-standard growth setting. The reason is that $L^G(0,T;W^{1,G}(\Omega))$ is a mixed norm space, where the integrability in time is evaluated after measuring the spatial norm. This does not correspond to the actual energy functional
$$
\iints{\Omega_T}[G(|Dw|)+a(z)H(|Dw|)]\,dz.
$$
In general Orlicz spaces, such mixed norm spaces do not coincide with the corresponding space-time spaces; that is,
$$
L^G(0,T;L^G(\Omega))\neq L^G(\Omega_T),
$$
unless $G$ is equivalent to some power function $t^p$ (see \cite{Maligranda2004}). This remains true even if $G\in\Delta_2\cap\nabla_2$. Consequently, this mismatch leads to a functional framework incompatible with the variational structure of the problem.

Moreover, the analysis of the Lavrentiev phenomenon is fundamentally connected with the density and approximation properties of the admissible space. In Orlicz spaces, the density of smooth functions may fail, and this failure is precisely what gives rise to the Lavrentiev phenomenon. Therefore, the function space must be chosen so that it reflects the actual domain of the energy functional, while still allowing one to study approximation by regular functions (see \cite{Chlebicka2019a,Chlebicka2019b}).

For this reason, the natural setting is given by
$$
w\in L^1(0,T;W^{1,1}(\Omega))
$$
together with the finiteness of the energy
$$
\mc{P}(w,\Omega_T)=\iints{\Omega_T}[G(|Dw|)+a(z)H(|Dw|)]\,dz<\infty.
$$
Note that $|Dw|\in L^G(\Omega_T)$ follows from the finiteness of the functional $\mc{P}$ and that this space can be regarded as a natural minimal framework.

\begin{theorem}    \label{thm: main theorem}
    Let $w\in L^1(0,T;W^{1,1}(\Omega))$ be such that $\mathcal{P}(w,\Omega_T)<\infty$ and let $Q\subset\subset\Omega_T$ be any cylinder. Assume that one of the following conditions holds:
    \begin{enumerate}[label=\upshape(\roman*)]
        \item\label{thm: main theorem general case} the gap condition
        \begin{equation}    \label{eq: general gap condition}
			\limsup_{\rho\to 0^+}\omega(\rho)\frac{(H\circ G^{-1})(\rho^{-n-2})}{\rho^{-n-2}}<\infty;
		\end{equation}
        \item\label{thm: main theorem bounded case} $w\in L^\infty(\Omega_T)$ and
        \begin{equation}    \label{eq: bounded gap condition}
			\limsup_{\rho\to 0^+}\omega(\rho)\frac{H(\rho^{-1})}{G(\rho^{-1})}<\infty;
		\end{equation}
        \item\label{thm: main theorem L^s case} $w\in L^\infty(0,T;L^s(\Omega))$ with $s\geq 2$ and
		\begin{equation}    \label{eq: L^infty(L^s) gap condition}
			\limsup_{\rho\to 0^+}\omega(\rho)\frac{H(\rho^{-(n+s)/s})}{G(\rho^{-(n+s)/s})}<\infty.
		\end{equation}
    \end{enumerate}
    Then there exists a sequence $\left\{ w_m \right\}_{m=1}^\infty$ in $C^\infty(Q)$ such that
    $$
    w_m\rightarrow w\text{ in }L^1(Q),\quad Dw_m\to Dw\text{ in }L^G(Q;\mr^n),\quad \mathcal{P}(w_m,Q)\rightarrow\mathcal{P}(w,Q).
    $$
\end{theorem}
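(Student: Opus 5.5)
The plan is to regularize by space-time mollification, $w_\varepsilon\coloneq w*\phi_\varepsilon$, where $\phi_\varepsilon(x,t)=\varepsilon^{-n-2}\phi(x/\varepsilon,t/\varepsilon^2)$ is a parabolically scaled mollifier, and then take $w_m\coloneq w_{\varepsilon_m}$ along a sequence $\varepsilon_m\to0$. For $\varepsilon<\dist(Q,\partial\Omega_T)$ (parabolic distance) this gives $w_\varepsilon\in C^\infty(Q)$, $w_\varepsilon\to w$ in $L^1(Q)$, and $Dw_\varepsilon=(Dw)*\phi_\varepsilon\to Dw$ in $L^G(Q)$. The last convergence holds because $G\in\Delta_2$ makes mollification converge in modular and hence in norm. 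It remains to prove $\mc{P}(w_\varepsilon,Q)\to\mc{P}(w,Q)$. By Fatou's lemma, the $\liminf$ inequality is automatic. By Vitali's theorem (or a generalized dominated convergence argument), it is enough to show that $\{a(z)H(|Dw_\varepsilon|)\}_\varepsilon$ is equi-integrable, or simply that
$$\limsup_{\varepsilon\to0}\iints{Q}a(z)H(|Dw_\varepsilon|)\,dz\le C\iints{Q'}[G(|Dw|)+a(z)H(|Dw|)]\,dz$$
locally with a constant independent of $\varepsilon$. In that case the $G$-part converges by the $L^G$ convergence.

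For the key pointwise estimate, fix $z\in Q$ and let $Q_\varepsilon(z)$ be the parabolic cylinder of radius $\varepsilon$. Set $a_\varepsilon(z)\coloneq\inf_{Q_\varepsilon(z)}a$, so that $a(z)\le a_\varepsilon(z)+\omega(\varepsilon)$. Since $H$ is convex, Jensen's inequality gives $a_\varepsilon(z)H(|Dw_\varepsilon(z)|)\le (a H(|Dw|))*\phi_\varepsilon(z)$, and this term is controlled in $L^1$ by the energy. The problematic term is $\omega(\varepsilon)H(|Dw_\varepsilon(z)|)$. Here a sup bound on $|Dw_\varepsilon|$ is needed, and the three cases supply it differently.

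In case \ref{thm: main theorem general case}, Jensen's inequality gives $G(|Dw_\varepsilon(z)|)\le \|\phi\|_\infty\varepsilon^{-n-2}\iints{Q'}G(|Dw|)\le C\varepsilon^{-n-2}$, so $|Dw_\varepsilon|\le G^{-1}(C\varepsilon^{-n-2})$. Writing $H(\tau)=\frac{H(\tau)}{G(\tau)}G(\tau)$ and using that $H\circ G^{-1}(s)/s$ is increasing (it is quasi-increasing by $G\prec H$ and $\Delta_2$), we get
$$\omega(\varepsilon)H(|Dw_\varepsilon|)\lesssim \omega(\varepsilon)\frac{(H\circ G^{-1})(\varepsilon^{-n-2})}{\varepsilon^{-n-2}}\,G(|Dw_\varepsilon|)\lesssim G(|Dw_\varepsilon|),$$
which is uniformly integrable by \eqref{eq: general gap condition}. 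In case \ref{thm: main theorem bounded case}, I integrate by parts in the mollification: $|Dw_\varepsilon|\le \|w\|_\infty\|D\phi_\varepsilon\|_{L^1}\le C\varepsilon^{-1}$, and \eqref{eq: bounded gap condition} then gives the same conclusion. In case \ref{thm: main theorem L^s case}, the analogous bound is $|Dw_\varepsilon(z)|\le \sup_t\|w(\cdot,t)\|_{L^s}\|D_x\phi_\varepsilon\|_{L^{s'}_xL^1_t}\lesssim \varepsilon^{-1-n/s}=\varepsilon^{-(n+s)/s}$, and \eqref{eq: L^infty(L^s) gap condition} finishes the argument. To turn these bounds into equi-integrability rather than just boundedness, I will apply them to $Dw\chi_{E}$ on small sets $E$, or use the modular bound together with the $\Delta_2$ dominance. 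The cleanest route is to write $a H(|Dw_\varepsilon|)\le C[(G(|Dw|)+aH(|Dw|))*\phi_\varepsilon] + C\,G(|Dw_\varepsilon|)$ pointwise. Each term on the right converges in $L^1$, so generalized dominated convergence applies.

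The main obstacle is that the Jensen step only uses the infimum of $a$ over $Q_\varepsilon(z)$ when $a_\varepsilon$ is not small relative to $\omega(\varepsilon)$. I will split into two cases. If $a_\varepsilon(z)\ge 4\omega(\varepsilon)$, then $a(z)\le 2a_\varepsilon(z)$ and Jensen suffices. Otherwise $a(z)\le 5\omega(\varepsilon)$, and the sup bounds above apply. Two technical points need care. First, I must verify the monotonicity (up to constants) of $H(\tau)/G(\tau)$ and of $H\circ G^{-1}(s)/s$ using $\Delta_2\cap\nabla_2$ and the definition of $G\prec H$. Second, case \ref{thm: main theorem L^s case} needs the mixed-norm Young inequality carried out with the parabolic scaling, which is where the constraint $s\ge2$ presumably enters.
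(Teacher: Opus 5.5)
Your proposal is correct and is essentially the paper's proof. The shared steps are: parabolic mollification; $a_\varepsilon(z)=\inf_{Q_\varepsilon(z)}a$ with $a\le a_\varepsilon+c\,\omega(\varepsilon)$; the case-dependent sup bounds on $|Dw_\varepsilon|$ (Jensen on $G$, or $\|w\|_\infty$, or H\"older in $L^\infty(0,T;L^s)$), combined with the monotonicity of $H/G$ and the gap condition to absorb $\omega(\varepsilon)H(|Dw_\varepsilon|)$ into $G(|Dw_\varepsilon|)$; Jensen for the $a_\varepsilon H$ term; and generalized dominated convergence. Two pieces are superfluous but harmless: the split according to whether $a_\varepsilon(z)\ge 4\omega(\varepsilon)$, since the additive bound already suffices, and your guess about $s\ge 2$, which is not used anywhere in the H\"older estimate for case (iii).
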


\begin{remark}
	Indeed, if $G(t)=t^p$, $H(t)=t^q$ and $a(\cdot)\in C^{\alpha,\frac{\alpha}{2}}(\Omega_T)$ with $2\leq p<q$ and $\alpha\in(0,1]$, then it is easy to check that
	$$
	\text{the condition }\eqref{eq: general gap condition} \iff q\leq p+\frac{p\alpha}{n+2},
	$$
	$$
	\text{the condition }\eqref{eq: bounded gap condition} \iff q\leq p+\alpha,
	$$
	and
	$$
	\text{the condition }\eqref{eq: L^infty(L^s) gap condition} \iff q\leq p+\frac{s\alpha}{n+s}.
	$$
\end{remark}

\medskip The outline of this paper is as follows. Section \ref{sec: preliminaries} introduces the definitions and properties of Young functions and the parabolic mollification used throughout this paper, while Section \ref{sec: proof of the main theorem} provides the proof of Theorem \ref{thm: main theorem}.

\section{Preliminaries}\label{sec: preliminaries}

\subsection{Young functions}\label{subsec: Young functions}

A Young function $\Phi:[0,\infty)\to[0,\infty)$ is an increasing convex function satisfying
$$
\Phi(0)=0,\quad\lim_{t\to \infty}\Phi(t)=\infty,\quad\lim_{t\to 0^+}\frac{\Phi(t)}{t}=0,\quad\lim_{t\to\infty}\frac{\Phi(t)}{t}=\infty.
$$

\begin{definition}    \label{def: Delta_2 and nabla_2 conditions}
	Let $\Phi$ be a Young function.
	\begin{enumerate}[label=\upshape(\arabic*)]
		\item\label{def: Delta_2} $\Phi$ is said to satisfy the $\Delta_2$-condition, $\Phi\in\Delta_2$, if there exists a constant $\Delta_2(\Phi)>0$ such that $\Phi(2t)\leq\Delta_2(\Phi)\Phi(t)$ for all $t\geq 0$.
		\item\label{def: nabla_2} $\Phi$ is said to satisfy the $\nabla_2$-condition, $\Phi\in\nabla_2$, if there exists a constant $\nabla_2(\Phi)>1$ such that $\Phi(\nabla_2(\Phi)t)\geq 2\nabla_2(\Phi)\Phi(t)$ for all $t\geq 0$.
		\item\label{def: Delta_2 and nabla_2} We write $\Phi\in\Delta_2\cap\nabla_2$ if $\Phi\in\Delta_2$ and $\Phi\in\nabla_2$.
	\end{enumerate}
\end{definition}

Note that if $\Phi\in\Delta_2$, then $\Delta_2(\Phi)>2$. More precisely, by the convexity of $\Phi$, we have
$$
\Phi(2t)\leq\Delta_2(\Phi)\Phi(t)\leq\frac{\Delta_2(\Phi)}{2}\Phi(2t)
$$
for all $t\geq 0$; hence, $\Delta_2(\Phi)\geq 2$. If $\Delta_2(\Phi)=2$, it follows from the previous inequality that $\Phi(2t)=2\Phi(t)$ for all $t\geq 0$ and thus
$$
	\Phi(1)=\frac{\Phi(2^k)}{2^k}
$$
for any $k\in\mathbb{Z}$. Since $\Phi$ is convex and $\Phi(0)=0$, the map $t\mapsto\Phi(t)/t$ is increasing and this implies that $\Phi(t)/t=\Phi(1)$ for $t> 0$; that is, $\Phi(t)=\Phi(1)t$ for $t\geq 0$ which is not a Young function. Therefore, $\Delta_2(\Phi)>2$.

\medskip The definitions of $\Delta_2$ and $\nabla_2$ conditions directly imply the following lemma.

\begin{lemma}[\cite{Byun2020}, Lemma 2.2]    \label{lem: properties for Young functions satisfying Delta_2 and nabla_2 conditions}
	Let $\Phi$ be a Young function.
	\begin{enumerate}[label=\upshape(\arabic*)]
		\item\label{lem: Delta_2 means aDec} For any $1\leq\Lambda<\infty$ and $t\geq 0$, we have
		$$
		\Phi(\Lambda t)\leq\Delta_2(\Phi)^{1+\log_2 \Lambda}\Phi(t),
		$$
		provided with $\Phi\in\Delta_2$.
		\item\label{lem: nabla_2 means aInc} For any $0<\lambda\leq 1$ and $t\geq 0$, we have
		$$
		\Phi(\lambda t)\leq 2\nabla_2(\Phi)\lambda^{1+\log_{\nabla_2(\Phi)}2}\Phi(t),
		$$
		provided with $\Phi\in\nabla_2$.
	\end{enumerate}
\end{lemma}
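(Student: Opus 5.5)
The plan is to prove both parts by dyadic-type iteration of the defining inequalities in Definition~\ref{def: Delta_2 and nabla_2 conditions}, combined with the monotonicity of $\Phi$. For $t=0$ both inequalities are trivial since $\Phi(0)=0$, so we fix $t>0$ throughout.

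For \ref{lem: Delta_2 means aDec}, fix $\Lambda\geq 1$ and let $k\coloneq\lceil\log_2\Lambda\rceil\in\{0,1,2,\dots\}$. Then $\Lambda\leq 2^k$ and $k\leq 1+\log_2\Lambda$. Iterating $\Phi(2s)\leq\Delta_2(\Phi)\Phi(s)$ exactly $k$ times, and using that $\Phi$ is increasing, we get
$$
\Phi(\Lambda t)\leq\Phi(2^kt)\leq\Delta_2(\Phi)^k\Phi(t)\leq\Delta_2(\Phi)^{1+\log_2\Lambda}\Phi(t).
$$
The last step uses $\Delta_2(\Phi)>2\geq1$, which was shown just before the lemma.

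For \ref{lem: nabla_2 means aInc}, write $N\coloneq\nabla_2(\Phi)>1$. Iterating $\Phi(Ns)\geq 2N\Phi(s)$ gives $\Phi(N^ks)\geq(2N)^k\Phi(s)$ for all integers $k\geq0$ and all $s\geq0$. Taking $s=N^{-k}t$, this is equivalent to
$$
\Phi(N^{-k}t)\leq(2N)^{-k}\Phi(t).
$$
Given $\lambda\in(0,1]$, choose the integer $k\geq0$ with $N^{-k-1}<\lambda\leq N^{-k}$, i.e. $k\leq-\log_N\lambda<k+1$. By monotonicity,
$$
\Phi(\lambda t)\leq\Phi(N^{-k}t)\leq(2N)^{-k}\Phi(t).
$$

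It then only remains to compare $(2N)^{-k}$ with the claimed factor. The key observation is the identity
$$
\lambda^{1+\log_N2}=\lambda^{\log_N(2N)}=(2N)^{\log_N\lambda}.
$$
Since $\log_N\lambda>-k-1$ and $2N>1$, this gives $\lambda^{1+\log_N2}>(2N)^{-k-1}$. Hence
$$
(2N)^{-k}=2N\cdot(2N)^{-k-1}<2N\lambda^{1+\log_N2},
$$
which yields the claimed estimate. There is no real obstacle here; the only point requiring care is this exponent bookkeeping (rewriting $\lambda^{1+\log_N2}$ as a power of $2N$). Notably, part \ref{lem: nabla_2 means aInc} does not even need convexity beyond monotonicity.
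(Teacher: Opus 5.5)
Your proof is correct and takes essentially the same approach as the paper. The paper gives no separate argument, only noting (citing \cite{Byun2020}) that the estimates follow directly from the definitions; your iteration of the $\Delta_2$ and $\nabla_2$ inequalities with monotonicity of $\Phi$ is that argument, and both the integer choice $N^{-k-1}<\lambda\le N^{-k}$ and the identity $\lambda^{1+\log_N 2}=(2N)^{\log_N\lambda}$ are handled correctly.
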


We now introduce a partial order relation between Young functions and some lemmas which will be used in Section \ref{sec: proof of the main theorem}.

\begin{definition}    \label{def: partially order for Young functions}
	Let $\Phi_1$ and $\Phi_2$ be Young functions. We shall write
	$$
	\Phi_1\prec\Phi_2
	$$
	if $\Phi_2\circ\Phi_1^{-1}$ is a Young function, where $\Phi_1^{-1}$ is the left-inverse of $\Phi_1$; that is,
    $$
    \Phi_1^{-1}(t)=\inf\left\{ s \geq 0 \;:\; \Phi_1(s) \geq t \right\}.
    $$
\end{definition}

The relation means that $\Phi_2$ grows faster than $\Phi_1$.

\begin{lemma}[\cite{Byun2020}, Lemma 2.7]    \label{lem: fraction of two Young functions}
	Let $\Phi_1$ and $\Phi_2$ be Young functions with $\Phi_1\prec\Phi_2$. Then
	$$
	t\mapsto\left( \frac{\Phi_2}{\Phi_1} \right)(t)=\frac{\Phi_2(t)}{\Phi_1(t)}
	$$
	is non-decreasing.
\end{lemma}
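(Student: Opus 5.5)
The statement to address is Lemma \ref{lem: fraction of two Young functions}: if $\Phi_1\prec\Phi_2$, then $\Phi_2/\Phi_1$ is non-decreasing. The plan is to reduce this to the standard fact that, for a convex function $\Psi$ with $\Psi(0)=0$, the quotient $\Psi(s)/s$ is non-decreasing on $(0,\infty)$. Here we take $\Psi\coloneq\Phi_2\circ\Phi_1^{-1}$, which is a Young function by the definition of $\Phi_1\prec\Phi_2$. This monotonicity of $\Psi(s)/s$ was already used in the paper's remark on $\Delta_2(\Phi)>2$. It follows from convexity: for $0<s_1<s_2$, write $s_1=\lambda s_2+(1-\lambda)0$ with $\lambda=s_1/s_2$, so that $\Psi(s_1)\le\lambda\Psi(s_2)$.

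The key steps are as follows. First, for $t>0$ with $\Phi_1(t)>0$, put $s=\Phi_1(t)$. We want the identity $\Psi(s)=\Phi_2(t)$, that is, $\Phi_2(\Phi_1^{-1}(\Phi_1(t)))=\Phi_2(t)$. Since $\Phi_1$ is a Young function, it is continuous and convex with $\Phi_1(0)=0$, and $\Phi_1(t)/t\to0$ as $t\to0^+$. So $\Phi_1$ may vanish on some interval $[0,t_0]$ and is strictly increasing on $[t_0,\infty)$. For $t> t_0$ the left inverse gives $\Phi_1^{-1}(\Phi_1(t))=t$, hence
$$
\frac{\Phi_2(t)}{\Phi_1(t)}=\frac{\Psi(\Phi_1(t))}{\Phi_1(t)}.
$$
Second, $t\mapsto\Phi_1(t)$ is non-decreasing and $s\mapsto\Psi(s)/s$ is non-decreasing, so the composition is non-decreasing in $t$ on $(t_0,\infty)$.

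The main obstacle is the degenerate region where $\Phi_1=0$, which occurs if $t_0>0$. There the quotient is of the form $0/0$ or $c/0$. I would handle this in one of two ways. One option is to note that $\Psi$ being a Young function forces $\Psi(s)/s\to0$, and that $\Psi$ is finite near $0$, which forces $\Phi_2(t)=\Psi(0)=0$ whenever $\Phi_1(t)=0$ and $t\le t_0$. The other is to adopt the convention that the quotient is only considered where $\Phi_1>0$. In the setting of the paper $\Phi_1,\Phi_2\in\Delta_2\cap\nabla_2$, and Lemma \ref{lem: properties for Young functions satisfying Delta_2 and nabla_2 conditions}\ref{lem: nabla_2 means aInc} gives $\Phi_1(t)>0$ for all $t>0$. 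Hence $t_0=0$, the identity holds for every $t>0$, and no degenerate region arises. I would therefore state the monotonicity on $(0,\infty)$ under positivity of $\Phi_1$ and remark on the general convention.
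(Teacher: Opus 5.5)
Your proof is correct and is the standard argument behind the cited \cite{Byun2020}, Lemma 2.7; the paper itself gives no proof. The argument writes $\Phi_2(t)/\Phi_1(t)=\Psi(\Phi_1(t))/\Phi_1(t)$ with $\Psi=\Phi_2\circ\Phi_1^{-1}$ and composes the non-decreasing maps $t\mapsto\Phi_1(t)$ and $s\mapsto\Psi(s)/s$.

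One side remark is misattributed. Positivity of $\Phi_1$ on $(0,\infty)$ follows from the $\Delta_2$ part, Lemma \ref{lem: properties for Young functions satisfying Delta_2 and nabla_2 conditions}\ref{lem: Delta_2 means aDec}: if $\Phi_1(t)=0$, then $\Phi_1(\Lambda t)=0$ for all $\Lambda\ge 1$, which contradicts $\Phi_1\to\infty$. It also follows simply from $\Phi_1$ being increasing in the paper's definition. It does not follow from the $\nabla_2$ part: $(t-1)_+^2$ satisfies $\nabla_2$ yet vanishes on $[0,1]$.
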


\begin{lemma}    \label{lem: properties for Phi_1,Phi_2 satisfy Delta_2, nabla_2, Phi_1 < Phi_2}
	Let $\Phi_1$ and $\Phi_2$ be Young functions with $\Phi_1,\Phi_2\in\Delta_2\cap\nabla_2$ and $\Phi_1\prec\Phi_2$. Then
	\begin{enumerate}[label=\upshape(\alph*)]
		\item\label{lem: Phi_2 circ Phi_1^{-1} satisfies Delta_2} $\Phi_2\circ \Phi_1^{-1}\in\Delta_2$;
		\item\label{lem: lem: almost decreasing-type inequality for Phi_2 circ Phi_1^{-1} and Phi_2/Phi_1} if $\Lambda\geq 1$ is given, there exists a constant $C\equiv C(\nabla_2(\Phi_1),\Delta_2(\Phi_2),\Lambda)>0$ such that
		$$
		(\Phi_2\circ \Phi_1^{-1})(\Lambda t)\leq C(\Phi_2\circ \Phi_1^{-1})(t)\quad\text{and}\quad\left( \frac{\Phi_2}{\Phi_1} \right)(\Lambda t)\leq C\left( \frac{\Phi_2}{\Phi_1} \right)(t)
		$$
		for all $t>0$.
	\end{enumerate}
\end{lemma}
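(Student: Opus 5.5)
The statement to prove is Lemma \ref{lem: properties for Phi_1,Phi_2 satisfy Delta_2, nabla_2, Phi_1 < Phi_2}. The plan is to reduce everything to Lemma \ref{lem: properties for Young functions satisfying Delta_2 and nabla_2 conditions}, applied to $\Phi_1$ through its $\nabla_2$ constant and to $\Phi_2$ through its $\Delta_2$ constant. Throughout, write $\Psi\coloneq\Phi_2\circ\Phi_1^{-1}$; by the definition of $\Phi_1\prec\Phi_2$ this is a Young function. Since $\Phi_1$ is a Young function with $\Phi_1(t)/t\to 0$ at $0$ and $\to\infty$ at $\infty$, it is continuous and strictly increasing on $[0,\infty)$ and onto $[0,\infty)$. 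Hence $\Phi_1^{-1}$ is a genuine inverse, $\Phi_1(\Phi_1^{-1}(t))=t$, and $\Phi_1^{-1}$ is increasing.

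\textbf{Step 1: an upper bound for $\Phi_1^{-1}(\Lambda t)$ with $\Lambda\ge1$.} Set $\beta\coloneq 1+\log_{\nabla_2(\Phi_1)}2>1$ and $c_1\coloneq 2\nabla_2(\Phi_1)$. Lemma \ref{lem: properties for Young functions satisfying Delta_2 and nabla_2 conditions}\ref{lem: nabla_2 means aInc} with $\lambda=1/\mu\le 1$ and $t$ replaced by $\mu s$ gives
\[
\Phi_1(\mu s)\ge c_1^{-1}\mu^{\beta}\Phi_1(s)\qquad\text{for all }\mu\ge1,\ s\ge0.
\]
Choose $\mu\coloneq\max\{1,(c_1\Lambda)^{1/\beta}\}$ and $s=\Phi_1^{-1}(t)$. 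Then $\Phi_1(\mu\Phi_1^{-1}(t))\ge \Lambda t$, and monotonicity of $\Phi_1^{-1}$ yields
\[
\Phi_1^{-1}(\Lambda t)\le \mu\,\Phi_1^{-1}(t).
\]
The constant $\mu$ depends only on $\nabla_2(\Phi_1)$ and $\Lambda$.

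\textbf{Step 2: the bound for $\Psi$.} Lemma \ref{lem: properties for Young functions satisfying Delta_2 and nabla_2 conditions}\ref{lem: Delta_2 means aDec} applied to $\Phi_2$ gives
\[
\Psi(\Lambda t)\le\Phi_2\bigl(\mu\Phi_1^{-1}(t)\bigr)\le \Delta_2(\Phi_2)^{1+\log_2\mu}\,\Psi(t).
\]
Taking $\Lambda=2$ proves \ref{lem: Phi_2 circ Phi_1^{-1} satisfies Delta_2}. Taking general $\Lambda\ge 1$ proves the first inequality in \ref{lem: lem: almost decreasing-type inequality for Phi_2 circ Phi_1^{-1} and Phi_2/Phi_1}.

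\textbf{Step 3: the bound for the quotient $\Phi_2/\Phi_1$.} This uses the two functions in the opposite roles. By Lemma \ref{lem: properties for Young functions satisfying Delta_2 and nabla_2 conditions}\ref{lem: Delta_2 means aDec} for $\Phi_2$,
\[
\Phi_2(\Lambda t)\le\Delta_2(\Phi_2)^{1+\log_2\Lambda}\,\Phi_2(t).
\]
Convexity of $\Phi_1$ with $\Phi_1(0)=0$ gives $\Phi_1(\Lambda t)\ge\Lambda\Phi_1(t)\ge\Phi_1(t)$; the $\nabla_2$ bound above would give the sharper $c_1^{-1}\Lambda^\beta$, but it is not needed. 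Dividing the two estimates yields
\[
\frac{\Phi_2}{\Phi_1}(\Lambda t)\le \Delta_2(\Phi_2)^{1+\log_2\Lambda}\,\frac{\Phi_2}{\Phi_1}(t)\qquad\text{for }t>0.
\]
To finish, take $C$ to be the larger of the constants obtained in Steps 2 and 3.

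\textbf{Expected main obstacle.} The only delicate point is Step 1: an upper bound on $\Phi_1^{-1}(\Lambda t)$ requires a \emph{lower} growth bound on $\Phi_1$, which is exactly the $\nabla_2$ property. This is why $C$ depends on $\nabla_2(\Phi_1)$ and not on $\Delta_2(\Phi_1)$. One must also check that $\Phi_1^{-1}$ really is the inverse, which holds by the strict monotonicity and surjectivity noted at the start.
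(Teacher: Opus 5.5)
Your proof is correct and is essentially the paper's argument: you invert the $\nabla_2$ growth of $\Phi_1$ to get $\Phi_1^{-1}(\Lambda t)\le\mu\,\Phi_1^{-1}(t)$ and apply $\Delta_2$ of $\Phi_2$, and for the quotient you divide the $\Delta_2$ bound on $\Phi_2(\Lambda t)$ by a lower bound on $\Phi_1(\Lambda t)$. The paper's proof differs only in two harmless ways: it iterates the $\Delta_2$ constant of $\Phi_2\circ\Phi_1^{-1}$ where you treat general $\Lambda$ directly, and it obtains the lower bound on $\Phi_1(\Lambda t)$ from $\nabla_2(\Phi_1)$ where your convexity argument is simpler. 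One small imprecision is that the two limit conditions alone do not make $\Phi_1$ strictly increasing (e.g.\ $(t-1)_+^2$); strict monotonicity follows from $\Phi_1\in\Delta_2$, and Step 1 in fact only needs $\Phi_1(\Phi_1^{-1}(t))=t$ and $\Phi_1^{-1}(\Phi_1(x))\le x$, which hold for the left-inverse of any continuous nondecreasing surjection of $[0,\infty)$.
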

\begin{proof}
	Since $\Phi_1\in\nabla_2$, for any $t>0$, we have
	$$
	\Phi_1(\nabla_2(\Phi_1) \Phi_1^{-1}(t))\geq 2\nabla_2(\Phi_1)\Phi_1(\Phi_1^{-1}(t))=2\nabla_2(\Phi_1)t> 2t.
	$$
	Since $\Phi_1^{-1}$ is increasing, it follows that
	$$
	\nabla_2(\Phi_1)\Phi_1^{-1}(t)=\Phi_1^{-1}(\Phi_1(\nabla_2(\Phi_1) \Phi_1^{-1}(t)))\geq\Phi_1^{-1}(2t).
	$$
	Applying Lemma \ref{lem: properties for Young functions satisfying Delta_2 and nabla_2 conditions}\ref{lem: Delta_2 means aDec} to $\Phi_2$, we obtain
	$$
	\Phi_2(\Phi_1^{-1}(2t))\leq \Phi_2(\nabla_2(\Phi_1)\Phi_1^{-1}(t))\leq\Delta_2(\Phi_2)^{1+\log_2\nabla_2(\Phi_1)}\Phi_2(\Phi_1^{-1}(t));
	$$
	this implies $\Phi_2\circ\Phi_1^{-1}\in\Delta_2$ with $\Delta_2(\Phi_2\circ\Phi_1^{-1})=\Delta_2(\Phi_2)^{1+\log_2\nabla_2(\Phi_1)}$.

	Now, we fix $\Lambda \geq 1$ and set $\Phi \coloneq \Phi_2\circ\Phi_1^{-1}\in\Delta_2$. Using Lemma \ref{lem: properties for Young functions satisfying Delta_2 and nabla_2 conditions}\ref{lem: Delta_2 means aDec} once more, we have
	$$
	\Phi(\Lambda t)\leq\Delta_2(\Phi)\Lambda^{\log_2\Delta_2(\Phi)}\Phi(t)
	$$
	and hence
	$$
	\frac{\Phi(\Lambda t)}{\Lambda t}\leq\Delta_2(\Phi)\Lambda^{\log_2\Delta_2(\Phi)-1}\frac{\Phi(t)}{t}
	$$
	with $\Delta_2(\Phi)>2$. On the other hand, it follows from Lemma \ref{lem: properties for Young functions satisfying Delta_2 and nabla_2 conditions} that
	$$
	\Phi_1(t)\leq \frac{1}{\Lambda_0}\Phi_1(\Lambda t)\quad\text{and}\quad \Phi_2(\Lambda t)\leq\Delta_2(\Phi_2)^{1+\log_2\Lambda}\Phi_2(t),
	$$
	where
	$$
	\frac{1}{\Lambda_0} \coloneq 2\nabla_2(\Phi_1)\left( \frac{1}{\Lambda} \right)^{1+\log_{\nabla_2(\Phi_1)}2}>0.
	$$
	Therefore,
	$$
	\left( \frac{\Phi_2}{\Phi_1} \right)(\Lambda t)\leq\Delta_2(\Phi_2)^{1+\log_2\Lambda}\frac{1}{\Lambda_0}\left( \frac{\Phi_2}{\Phi_1} \right)(t).
	$$
\end{proof}

\begin{remark}
	In the proof of Lemma \ref{lem: properties for Phi_1,Phi_2 satisfy Delta_2, nabla_2, Phi_1 < Phi_2}\ref{lem: Phi_2 circ Phi_1^{-1} satisfies Delta_2}, we only used the assumptions $\Phi_1\in\nabla_2$ and $\Phi_2\in\Delta_2$, not $\Phi_1,\Phi_2\in\Delta_2\cap\nabla_2$. Moreover, it does not follow in general that $\Phi_2\circ\Phi_1^{-1}\in\nabla_2$, even if $\Phi_1,\Phi_2\in\Delta_2\cap\nabla_2$. For instance, let
	$$
	\Phi_1(t)=t^p\quad\text{and}\quad\Phi_2(t)=t^p\log(1+t)
	$$
	with $p>1$. Then $\Phi_1,\Phi_2\in\Delta_2\cap\nabla_2$ and $\Phi_1\prec\Phi_2$. However, if $\Phi\coloneq\Phi_2\circ\Phi_1^{-1}\in\nabla_2$, there is some $C=\nabla_2(\Phi)>1$ such that $\Phi(Ct)\geq 2C\Phi(t)$ for all $t\geq 0$. Thus,
	$$
	2C\leq\frac{\Phi(Ct)}{\Phi(t)}=C\cdot\frac{\log(1+(Ct)^{\frac{1}{p}})}{\log(1+t^{\frac{1}{p}})}\quad\text{for }t>0.
	$$
	Note that the fraction in the last term tends to $1$ as $t\to\infty$. Hence, there exists $t_0>0$ such that
	$$
	\frac{\log(1+(Ct)^{\frac{1}{p}})}{\log(1+t^{\frac{1}{p}})}<\frac{3}{2}
	$$
	whenever $t>t_0$, which yields a contradiction.
\end{remark}

\subsection{Parabolic mollification}\label{subsec: parabolic mollification}

For $z=(x,t)\in\mr^{n+1}$, define
$$
\kappa(z)=\kappa(x,t) \coloneq \tilde{\kappa}_n(|x|)\tilde{\kappa}_1(t),
$$
where $\tilde{\kappa}_m\in C^\infty (\mr)$ is given by
$$
\tilde{\kappa}_m(s) \coloneq \begin{cases}
	c_m\operatorname{exp}\left(\frac{1}{s^2-1}\right)\quad & \text{if }|s|<1,     \\
	0\quad                                                 & \text{if }|s|\geq 1,
\end{cases}
$$
with $c_m>0$ determined by $\int_{\mr^m}\tilde{\kappa}_m(|x|)\, dx =1$. Denote
$$
\kappa_h(x,t) \coloneq \frac{1}{h^{n+2}}\kappa\left( \frac{x}{h},\frac{t}{h^2} \right)\quad\text{for }h>0.
$$
We notice that $\kappa_h\in C^\infty(\mr^{n+1})$, $\kappa_h\geq 0$, $\|\kappa_h\|_{L^1(\mr^{n+1})}=1$ and $\operatorname{supp}(\kappa_h)\subset Q_h(0)$, where $Q_r(z)=B_r(x)\times I_{r^2}(t)$ for $r>0$ and $z=(x,t)\in\mr^{n+1}$. Moreover, $0\leq \kappa_h\leq c(n)h^{-n-2}$, $0\leq |D\kappa_h|\leq c(n)h^{-n-3}$ and $0\leq |\partial_t\kappa_h|\leq c(n)h^{-n-4}$.

We further define for $f\in L^1(U)$, where $U$ is a bounded open set in $\mr^{n+1}$,
$$
[f]^h (z) \coloneq (f * \kappa_h)(z)=\iints{\mr^{n+1}} f(z-\sigma)\kappa_h(\sigma)\,d\sigma=\iints{Q_h(0)} f(z-\sigma)\kappa_h(\sigma)\, d\sigma,
$$
where $Q_h(z)\subset U$.

\begin{proposition}[\cite{BYJ2026}, Proposition 2.1]    \label{prop: parabolic convolution}
	Let $U'\subset\subset U$ and $0<h<\frac{1}{2}\dist(\partial U,U')$. Then the following statements hold:
	\begin{enumerate}[label=\upshape(\roman*)]
		\item $[f]^h\in C^\infty(U')$ with $D^\alpha [f]^h=f*D^\alpha\kappa_h$;
		\item $[f]^h\to f$ a.e. in $U'$;
		\item if $f\in L^1(I;W^{k,1}(B))$ with $U'=B\times I=Q$, then $[D^\alpha f]^h=D^\alpha[f]^h$ on $Q$ for every multi-index $|\alpha|\leq k$.
	\end{enumerate}
\end{proposition}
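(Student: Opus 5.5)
We prove the three assertions of Proposition \ref{prop: parabolic convolution} directly from the structure of $\kappa_h$, along the lines of the classical Euclidean mollifier. The only difference is the anisotropic scaling $(x,t)\mapsto(x/h,t/h^2)$, which changes the shape of the support but none of the arguments.

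\emph{Step (i).} Fix $z\in U'$. Since $h<\frac12\dist(\partial U,U')$, the cylinder $Q_{2h}(z)$ (in the parabolic sense, for $h$ small) lies in $U$. Extending $f$ by zero outside $U$, we may therefore write $[f]^h(z)=\iint f(\zeta)\kappa_h(z-\zeta)\,d\zeta$.

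We differentiate under the integral, starting with a first-order difference quotient in a direction $e_i$ (spatial or temporal). By the mean value theorem,
\[
\frac{\kappa_h(z+\tau e_i-\zeta)-\kappa_h(z-\zeta)}{\tau}\to\partial_i\kappa_h(z-\zeta)
\]
uniformly in $\zeta$, because $\kappa_h\in C^\infty_c$ has bounded second derivatives. For $|\tau|$ small, all these functions are supported in a fixed compact subset of $U$, and $f\in L^1(U)$. Dominated convergence then gives $\partial_i[f]^h=f*\partial_i\kappa_h$.

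Induction on $|\alpha|$ yields $D^\alpha[f]^h=f*D^\alpha\kappa_h$. Each such function is continuous, by the same dominated convergence together with the uniform continuity of $D^\alpha\kappa_h$. Hence $[f]^h\in C^\infty(U')$.

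\emph{Step (ii).} Using $\|\kappa_h\|_{L^1}=1$ and $0\le\kappa_h\le c(n)h^{-n-2}$, we obtain
\[
|[f]^h(z)-f(z)|\le\iint_{Q_h(0)}|f(z-\sigma)-f(z)|\kappa_h(\sigma)\,d\sigma\le c(n)\,\frac{1}{|Q_h|}\iint_{Q_h(z)}|f(\zeta)-f(z)|\,d\zeta,
\]
since $|Q_h|=c\,h^{n+2}$. It therefore suffices that a.e. $z$ is a Lebesgue point of $f$ with respect to the family of parabolic cylinders $Q_h(z)$. This is the main point to check.

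The cylinders $Q_r(z)$ are balls of the parabolic metric $d((x,t),(y,s))=\max\{|x-y|,|t-s|^{1/2}\}$. Lebesgue measure is doubling for this metric, since $|Q_{2r}|=2^{n+2}|Q_r|$. The Vitali covering lemma holds in any doubling metric space, so the Hardy--Littlewood maximal function over parabolic cylinders is of weak type $(1,1)$. The Lebesgue differentiation theorem then follows in the standard way: the statement is trivial for continuous functions, which are dense in $L^1$, and the weak $(1,1)$ bound handles the approximation error. This gives (ii).

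\emph{Step (iii).} Let $f\in L^1(I;W^{k,1}(B))$ and $|\alpha|\le k$, where $\alpha$ involves only spatial derivatives. For fixed $z\in Q$, the map $\zeta\mapsto\kappa_h(z-\zeta)$ is smooth with compact support in $U$. For a.e. fixed time slice $s$, the function $y\mapsto\kappa_h(z-(y,s))$ lies in $C_c^\infty(B)$. The definition of the weak derivative of $f(\cdot,s)$ then gives
\[
\int f(y,s)\,D_y^\alpha\bigl[\kappa_h(z-(y,s))\bigr]\,dy=(-1)^{|\alpha|}\int f(y,s)\,(D^\alpha\kappa_h)(z-(y,s))\,dy\cdot(-1)^{|\alpha|},
\]
which, after accounting for the chain-rule sign $(-1)^{|\alpha|}$ from differentiating $z-\zeta$ in $y$, equals $\int D^\alpha f(y,s)\,\kappa_h(z-(y,s))\,dy$.

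We then integrate in $s$; Fubini applies since $D^\alpha f\in L^1$. Combined with (i), this yields
\[
D^\alpha[f]^h(z)=(f*D^\alpha\kappa_h)(z)=(D^\alpha f*\kappa_h)(z)=[D^\alpha f]^h(z).
\]
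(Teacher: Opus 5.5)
The paper gives no proof of this proposition; it is quoted from \cite{BYJ2026}. Your argument therefore has to stand on its own as the standard mollifier proof. Parts (i) and (ii) do. Differentiation under the integral sign is routine. In (ii) you correctly isolate the one non-Euclidean point: $Q_h(z)$ is not comparable to a Euclidean ball, so one needs the Lebesgue differentiation theorem for the doubling metric $\max\{|x-y|,|t-s|^{1/2}\}$, whose balls are exactly the cylinders $Q_r(z)$.

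\textbf{The gap is in (iii).} You claim that for $z=(x,t)\in Q=B\times I$ and a.e.\ $s$, the function $y\mapsto\kappa_h(x-y,t-s)$ lies in $C_c^\infty(B)$. Its support is $\overline{B_h(x)}$, so this fails whenever $\dist(x,\partial B)\le h$. Similarly, when $t$ is within $h^2$ of an endpoint of $I$, the $s$-integration involves slices $s\notin I$, about which $f\in L^1(I;W^{k,1}(B))$ says nothing.

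This cannot be patched under the literal hypothesis, because (iii) is then false. Take $f=\chi_{Q}\in L^1(U)$. It belongs to $L^1(I;W^{k,1}(B))$ with $D^\alpha f=0$ in $Q$ for $1\le|\alpha|\le k$. Once $D^\alpha f$, which the hypothesis only provides on $Q$, is extended by zero, $[D^\alpha f]^h\equiv 0$ on $Q$. On the other hand, for $t$ well inside $I$ the function $[f]^h(\cdot,t)$ is the spatial mollification of $\chi_B$, which is not constant on $B$. Hence $D[f]^h\not\equiv 0$ near $\partial B\times I$.

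What you must assume is that $f\in L^1(I';W^{k,1}(B'))$ on a cylinder $B'\times I'\subset U$ containing $Q_h(z)$ for every $z\in Q$. This is exactly what holds where the proposition is used: $w\in L^1(0,T;W^{1,1}(\Omega))$ with $Q_{R+h_0}\subset\subset\Omega_T$. Then $y\mapsto\kappa_h(x-y,t-s)\in C_c^\infty(B')$ for a.e.\ $s\in I'$, and your slice-wise integration by parts followed by Fubini goes through. You should state this reading explicitly.

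Separately, your displayed identity is off by a factor $(-1)^{|\alpha|}$. The chain-rule sign and the integration-by-parts sign belong on different sides:
\begin{align*}
\int f(y,s)\,D_y^\alpha\bigl[\kappa_h(z-(y,s))\bigr]\,dy
&=(-1)^{|\alpha|}\int f(y,s)\,(D^\alpha\kappa_h)(z-(y,s))\,dy\\
&=(-1)^{|\alpha|}\int D^\alpha f(y,s)\,\kappa_h(z-(y,s))\,dy.
\end{align*}
After cancelling $(-1)^{|\alpha|}$, this is the identity $f*D^\alpha\kappa_h=D^\alpha f*\kappa_h$ at $z$ that you want.
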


Define $\Psi:\Omega_T\times\mr^n\to[0,\infty)$ by
$$
\Psi(z,\xi) \coloneq G(|\xi|)+a(z)H(|\xi|)
$$
for $z\in\Omega_T$ and $\xi\in\mr^n$. With a slight abuse of notation, we shall denote $\Psi(z,\xi)$ also when $\xi\in[0,\infty)$.

Note that $q \coloneq \log_2\Delta_2(G)>1$. By Lemma \ref{lem: properties for Young functions satisfying Delta_2 and nabla_2 conditions}\ref{lem: Delta_2 means aDec}, we have
$$
\frac{G(s)}{s^q}\leq\Delta_2(G)\frac{G(t)}{t^q}
$$
for $t\geq s>0$. This implies the following proposition.

\begin{proposition}[\cite{Hasto2025}, Proposition 4.1]    \label{prop: L^G convergence for parabolic mollifications}
	Let $U\subset\mr^{n+1}$ be a bounded open set and let $\kappa_h\in C_c^\infty(\mr^{n+1})$ and $[f]^h$ be as above. Then, for $f\in L_{\operatorname{loc}}^G(U)$,
	$$
	[f]^h\to f\quad\text{in }L_{\operatorname{loc}}^G(U)
	$$
	as $h\to 0^+$.
\end{proposition}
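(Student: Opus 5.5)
The plan is to combine three ingredients: Jensen's inequality, which makes mollification a contraction for both the modular and the Luxemburg norm; density of continuous functions in $L^G$, which holds because $G\in\Delta_2$; and uniform convergence of mollifications of continuous functions. Throughout, fix $U'\subset\subset U$. Choose an intermediate open set $U'\subset\subset U''\subset\subset U$ and take $h$ so small that $Q_h(z)\subset U''$ for every $z\in U'$. It suffices to show $\|[f]^h-f\|_{L^G(U')}\to 0$.

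\emph{Step 1: contraction.} Since $\kappa_h\ge 0$ and $\|\kappa_h\|_{L^1}=1$, Jensen's inequality for the convex function $G$ gives, for every $\lambda>0$ and $z\in U'$,
$$
G\Big(\frac{|[f]^h(z)|}{\lambda}\Big)\le \Big[G\Big(\frac{|f|}{\lambda}\Big)\Big]^h(z).
$$
Integrate over $U'$ and apply Fubini, using $\operatorname{supp}\kappa_h\subset Q_h(0)$. This yields
$$
\iints{U'}G\Big(\frac{|[f]^h|}{\lambda}\Big)dz\le\iints{U''}G\Big(\frac{|f|}{\lambda}\Big)dz .
$$
Hence $\|[f]^h\|_{L^G(U')}\le\|f\|_{L^G(U'')}$, and the same bound holds for any $L^G(U'')$ function in place of $f$.

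\emph{Step 2: density.} This is the step I expect to be the main obstacle. Given $\varepsilon>0$, I want $g\in C_c(U)$ with $\|f-g\|_{L^G(U'')}<\varepsilon$. First truncate: set $f_k=f\chi_{\{|f|\le k\}}$. Then $G(|f-f_k|/\lambda)\to 0$ a.e. and is dominated by $G(|f|/\lambda)$, which is integrable for every $\lambda>0$ because of $\Delta_2$. Concretely, the displayed consequence $G(s)/s^q\le\Delta_2(G)G(t)/t^q$ gives $G(t/\lambda)\le \Delta_2(G)\lambda^{-q}G(t)$ for $\lambda\le1$. Dominated convergence then gives convergence in norm.

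Next approximate the bounded function $f_k$ on the bounded set $U''$ by $g\in C_c(U)$ with $|g|\le k$ and $g\to f_k$ a.e. along a sequence, for instance via Lusin's theorem or standard $L^1$ density followed by truncation. Since $G(2k/\lambda)$ is a constant dominating $G(|f_k-g|/\lambda)$ on the finite-measure set $U''$, dominated convergence again yields convergence of the modular for every $\lambda$. By the $\Delta_2$ condition, convergence of the modular for every $\lambda$ is equivalent to convergence in norm.

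\emph{Step 3: conclusion.} Write
$$
[f]^h-f=[f-g]^h+([g]^h-g)+(g-f).
$$
By Step 1, the first term has $L^G(U')$ norm at most $\varepsilon$, and the third term has norm below $\varepsilon$ by Step 2. The function $g$ is uniformly continuous, so $[g]^h\to g$ uniformly on $U'$. Because $U'$ is bounded and a constant $c$ satisfies $\|c\|_{L^G(U')}\lesssim c$ for small $c$, the middle term tends to $0$. Therefore
$$
\limsup_{h\to0^+}\|[f]^h-f\|_{L^G(U')}\le 2\varepsilon,
$$
and letting $\varepsilon\to0$ proves the claim. The $\nabla_2$ assumption is not needed in this argument; only convexity of $G$ and $G\in\Delta_2$ are used.
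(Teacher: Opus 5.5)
Your proof is correct. The paper gives no argument of its own for this proposition. It imports it from H\"ast\"o--Ok \cite{Hasto2025}, and its only justification is that $\Delta_2$ gives an upper power-type bound ($G(t)/t^q$ almost decreasing, $q=\log_2\Delta_2(G)$), which it says implies the cited result.

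Your route is a self-contained proof of this autonomous case, and the steps check out:
\begin{itemize}
\item Because $G$ does not depend on $z$, Jensen's inequality makes $f\mapsto[f]^h$ a contraction $L^G(U'')\to L^G(U')$, uniformly in $h$.
\item Density of $C_c$ then follows by truncation plus dominated convergence; this is where $\Delta_2$ enters.
\item The $\varepsilon/3$ argument with uniform convergence of $[g]^h$ closes it.
\end{itemize}

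The citation places the statement inside a broader generalized-Orlicz framework. There, for $z$-dependent integrands, Jensen alone no longer bounds the mollifier, because $\varphi(z,\cdot)$ must be compared with $\varphi(z-\sigma,\cdot)$ (the same issue the main proof handles with $a_h$). Your argument instead makes explicit that only monotonicity, convexity, $G(0)=0$ and $\Delta_2$ are used: no $\nabla_2$, and no Young-function normalization at $0$ and $\infty$. That matters here. It directly justifies the paper's remark that the proposition holds for $\varphi_p(s)=s^p$, $p\ge 1$, including $p=1$, which is not a Young function in the paper's sense. That $p=1$ case is exactly what Section~\ref{sec: proof of the main theorem} uses to get $[w]^h\to w$ and $[\Psi(\cdot,Dw(\cdot))]^h\to\Psi(\cdot,Dw(\cdot))$ in $L^1(Q)$.

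Two small remarks. First, your inequality $G(t/\lambda)\le\Delta_2(G)\lambda^{-q}G(t)$ for $\lambda\le 1$ is right, but it comes from Lemma~\ref{lem: properties for Young functions satisfying Delta_2 and nabla_2 conditions}\ref{lem: Delta_2 means aDec} with $\Lambda=1/\lambda$, not from the paper's display as printed. That display has $s$ and $t$ interchanged; the correct form is $G(t)/t^q\le\Delta_2(G)\,G(s)/s^q$ for $t\ge s>0$, and the printed version would not yield your bound. Second, the equivalence "modular convergence for every $\lambda$ $\iff$ norm convergence" holds for any Young function, without $\Delta_2$. You only need $\Delta_2$ to make $G(|f|/\lambda)$ integrable for every $\lambda$ in the truncation step.
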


Indeed, Proposition \ref{prop: L^G convergence for parabolic mollifications} holds not only for $G$ but also for the functions $\varphi_p:[0,\infty)\to[0,\infty),\, p\geq 1$, defined by $\varphi_p(s)=s^p$.

\section{Proof of absence of the Lavrentiev phenomenon}\label{sec: proof of the main theorem}

\begin{proof}[Proof of Theorem \ref{thm: main theorem}]
	Let $R>0$ be the radius for the cylinder $Q$ and let $h_0>0$ be such that
	$$
	Q\equiv Q_R\subset\subset Q_{R+h_0}\subset\subset\Omega_T.
	$$
	From now on, we always assume that $h\in (0,h_0)$ so that $[w]^h$ is well-defined in $Q$. Define
	$$
	[w]^h(z) \coloneq (w*\kappa_h)(z),\quad a_h(z) \coloneq \inf_{Q_h(z)}a(\cdot),\quad \Psi_h(z,\xi) \coloneq G(|\xi|)+a_h(z)H(|\xi|)
	$$
	for $z\in Q$ and $\xi\in\mr^n$. Since $w\in L^1(0,T;W^{1,1}(\Omega))\subset L_{\loc}^1(\Omega_T)$ and $Dw\in L_{\loc}^G(\Omega_T)$ with $\mc{P}(w,\Omega_T)<\infty$, it follows from Proposition \ref{prop: L^G convergence for parabolic mollifications} that
	$$
	[w]^h\to w \quad\text{in }L^1(Q)\quad\text{and}\quad D[w]^h\to Dw\quad\text{in } L^G(Q).
	$$

	Observe that
	\begin{equation}    \label{eq: estimate Psi by H+Psi_h}
		\Psi(z,\xi)\leq|a(z)-a_h(z)|H(|\xi|)+\Psi_h(z,\xi)\leq c\omega(h)H(|\xi|)+\Psi_h(z,\xi)
	\end{equation}
	for every $z\in Q$ and $\xi\in\mr^n$; in particular, it is true for $\xi=D[w]^h(z)$. Jensen's inequality implies that
	\begin{equation}    \label{eq: estimate G by power of h}
		\begin{aligned}
			G(|D[w]^h(z)|) & \leq\iints{Q_h(0)}G(|Dw(z-\sigma)|)\kappa_h(\sigma)\,d\sigma \\
			& \leq c(n)h^{-n-2}\iints{\Omega_T}G(|Dw|)\,d\sigma\leq ch^{-n-2}
		\end{aligned}
	\end{equation}
	for $z\in Q$, where $c>0$ depends only on $n$ and $\|Dw\|_{L^G(\Omega_T)}$. Since the mapping
	$$
	t\mapsto\left( \frac{H}{G} \right)(t)
	$$
	is non-decreasing by Lemma \ref{lem: fraction of two Young functions}, it follows from Lemma \ref{lem: properties for Phi_1,Phi_2 satisfy Delta_2, nabla_2, Phi_1 < Phi_2} that
	\begin{equation}    \label{eq: estimate H by Psi_h}
		\begin{aligned}
		H(|D[w]^h(z)|) & =\left( \frac{H}{G} \right)(|D[w]^h(z)|)G(|D[w]^h(z)|) \\
		& \leq\left( \frac{H}{G} \right)(G^{-1}(ch^{-n-2}))G(|D[w]^h(z)|) \\
		& =\frac{(H\circ G^{-1})(ch^{-n-2})}{ch^{-n-2}}G(|D[w]^h(z)|) \\
		& \leq c\frac{(H\circ G^{-1})(h^{-n-2})}{h^{-n-2}}\Psi_h(z,D[w]^h(z)).
	\end{aligned}
	\end{equation}
	
	Therefore, we obtain from \eqref{eq: estimate Psi by H+Psi_h} that
	$$
	\begin{aligned}
		\Psi(z,D[w]^h(z)) & \leq c\left( \omega(h)\frac{(H\circ G^{-1})(h^{-n-2})}{h^{-n-2}}+1 \right)\Psi_h(z,D[w]^h(z)) \\
		& \leq c\Psi_h(z,D[w]^h(z))
	\end{aligned}
	$$
	with the assumption \eqref{eq: general gap condition}. Using Jensen's inequality once more, we have
	$$
	\begin{aligned}
		\Psi_h(z,D[w]^h(z)) & \leq\iints{Q_h(z)}\left[ G(|Dw(\sigma)|)+a_h(z)H(|Dw(\sigma)|) \right]\kappa_h(z-\sigma)\,d\sigma \\
		& \leq \iints{Q_h(z)}\Psi(\sigma,Dw(\sigma))\kappa_h(z-\sigma)\,d\sigma                                 \\
		& =[\Psi(\cdot,Dw(\cdot))*\kappa_h](z)=[\Psi(\cdot,Dw(\cdot))]^h(z).
	\end{aligned}
	$$
	Thus,
	$$
	\Psi(z,D[w]^h(z))\leq c[\Psi(\cdot,Dw(\cdot))]^h(z)
	$$
	for any $z\in Q$. Since $\mc{P}(w,\Omega_T)<\infty$,
	$$
	\Psi(\cdot,Dw(\cdot))\in L^1(\Omega_T)\subset L_{\loc}^1(\Omega_T).
	$$
	Applying Proposition \ref{prop: L^G convergence for parabolic mollifications}, we obtain that
	$$
	[\Psi(\cdot,Dw(\cdot))]^h\to\Psi(\cdot,Dw(\cdot))\quad\text{in }L^1(Q).
	$$
	Since $D[w]^h\to Dw$ a.e. in $Q$ with Proposition \ref{prop: parabolic convolution},
	$$
	\Psi(z,D[w]^h(z))\to\Psi(z,Dw(z))\quad\text{a.e. }z\in Q.
	$$
	A generalization of the dominated convergence theorem allows us to obtain a sequence $w_m \coloneq [w]^{h_m}\in C^\infty(Q)$ with $h_m\searrow 0$ that completes our proof.

	\medskip For the cases \ref{thm: main theorem bounded case} and \ref{thm: main theorem L^s case}, it suffices to perform calculations analogous to \eqref{eq: estimate G by power of h} and \eqref{eq: estimate H by Psi_h}. If $w\in L^\infty(\Omega_T)$ and \eqref{eq: bounded gap condition} holds, then we have
    $$
    |D[w]^h(z)|\leq\iints{Q_h(z)}|w(\sigma)||D\kappa_h(z-\sigma)|\,d\sigma\leq ch^{-n-3}\iints{Q_h(z)}|w(\sigma)|\,d\sigma\leq ch^{-1}
    $$
	and
	$$
	\begin{aligned}
		H(|D[w]^h(z)|)
		& = \left( \frac{H}{G} \right)(|D[w]^h(z)|)G(|D[w]^h(z)|) \\
		& \leq \left( \frac{H}{G} \right)(ch^{-1})G(|D[w]^h(z)|) \\
		& \leq c\frac{H(h^{-1})}{G(h^{-1})}\Psi_h(z,D[w]^h(z)).
	\end{aligned}
	$$
	Using \eqref{eq: estimate Psi by H+Psi_h}, we get
	$$
	\begin{aligned}
		\Psi(z,D[w]^h(z)) & \leq c\left( \omega(h)\frac{H(h^{-1})}{G(h^{-1})}+1 \right)\Psi_h(z,D[w]^h(z)) \\
		& \leq c\Psi_h(z,D[w]^h(z)).
	\end{aligned}
	$$
	The remaining part and the choice of the sequence are exactly the same as in the previous argument.

    Similarly, if $w\in L^\infty(0,T;L^s(\Omega))$ and \eqref{eq: L^infty(L^s) gap condition} holds, then it follows from H\"{o}lder's inequality that
	$$
	\begin{aligned}
        |D[w]^h(z)| & \leq\iints{Q_h(z)}|w(\sigma)||D\kappa_h(z-\sigma)|\,d\sigma \\
        & \leq ch^{-n-3}\iints{Q_h(z)}|w(\sigma)|\,d\sigma \\
        & \leq ch^{-n-3}\int_{I_{h^2}(t)}\left( \int_{B_h(x)}|w(y,\tau)|^s\,dy \right)^{\frac{1}{s}}\left( \int_{B_h(x)} 1 \,dy \right)^{\frac{s-1}{s}}d\tau \\
        & \leq ch^{-n-3}\int_{I_{h^2}(t)}\left( \int_{B_h(x)} 1 \,dy \right)^{\frac{s-1}{s}}d\tau \\
        & \leq ch^{-(n+s)/s}
    \end{aligned}
	$$
	and
	$$
	\begin{aligned}
		H(|D[w]^h(z)|) & = \left( \frac{H}{G} \right)(|D[w]^h(z)|)G(|D[w]^h(z)|) \\
		& \leq \left( \frac{H}{G} \right)(ch^{-(n+s)/s})G(|D[w]^h(z)|) \\
		&  \leq c\frac{H(h^{-(n+s)/s})}{G(h^{-(n+s)/s})}\Psi_h(z,D[w]^h(z)).
	\end{aligned}
	$$
	Therefore,
	$$
	\begin{aligned}
		\Psi(z,D[w]^h(z)) & \leq c\left( \omega(h)\frac{H(h^{-(n+s)/s})}{G(h^{-(n+s)/s})}+1 \right)\Psi_h(z,D[w]^h(z)) \\
		& \leq c\Psi_h(z,D[w]^h(z))
	\end{aligned}
	$$
	by \eqref{eq: estimate Psi by H+Psi_h} again. The remaining part is identical to the previous argument.
\end{proof}

\bibliographystyle{abbrv}
\bibliography{ref}{}

@Article{Wontae2023,
  author   = {Kim, Wontae and Kinnunen, Juha and Moring, Kristian},
  journal  = {Arch. Ration. Mech. Anal.},
  title    = {Gradient higher integrability for degenerate parabolic double-phase systems},
  year     = {2023},
  issn     = {0003-9527,1432-0673},
  number   = {5},
  pages    = {Paper No. 79, 46},
  volume   = {247},
  doi      = {10.1007/s00205-023-01918-0},
  fjournal = {Archive for Rational Mechanics and Analysis},
  mrclass  = {35J05},
  mrnumber = {4627284},
  url      = {https://doi.org/10.1007/s00205-023-01918-0},
}

@Article{Ok2017,
  author     = {Ok, Jihoon},
  journal    = {Calc. Var. Partial Differential Equations},
  title      = {Regularity of {$\omega$}-minimizers for a class of functionals with non-standard growth},
  year       = {2017},
  issn       = {0944-2669},
  number     = {2},
  pages      = {Paper No. 48, 31},
  volume     = {56},
  doi        = {10.1007/s00526-017-1137-5},
  fjournal   = {Calculus of Variations and Partial Differential Equations},
  mrclass    = {49N60 (35B65 35J20)},
  mrnumber   = {3626319},
  mrreviewer = {Xiaodong Yan},
  url        = {https://doi.org/10.1007/s00526-017-1137-5},
}

@Article{Colombo2015,
  author     = {Colombo, Maria and Mingione, Giuseppe},
  journal    = {Arch. Ration. Mech. Anal.},
  title      = {Regularity for double phase variational problems},
  year       = {2015},
  issn       = {0003-9527},
  number     = {2},
  pages      = {443--496},
  volume     = {215},
  doi        = {10.1007/s00205-014-0785-2},
  fjournal   = {Archive for Rational Mechanics and Analysis},
  mrclass    = {49N60 (35B27 35B65)},
  mrnumber   = {3294408},
  mrreviewer = {Eugen Viszus},
  url        = {https://doi.org/10.1007/s00205-014-0785-2},
}

@Article{Baroni2018,
  author     = {Baroni, Paolo and Colombo, Maria and Mingione, Giuseppe},
  journal    = {Calc. Var. Partial Differential Equations},
  title      = {Regularity for general functionals with double phase},
  year       = {2018},
  issn       = {0944-2669},
  number     = {2},
  pages      = {Paper No. 62, 48},
  volume     = {57},
  doi        = {10.1007/s00526-018-1332-z},
  fjournal   = {Calculus of Variations and Partial Differential Equations},
  mrclass    = {49N60},
  mrnumber   = {3775180},
  mrreviewer = {Elvira Mascolo},
  url        = {https://doi.org/10.1007/s00526-018-1332-z},
}

@Article{Baroni2015,
  author     = {Baroni, Paolo and Colombo, Maria and Mingione, Giuseppe},
  journal    = {Nonlinear Anal.},
  title      = {Harnack inequalities for double phase functionals},
  year       = {2015},
  issn       = {0362-546X},
  pages      = {206--222},
  volume     = {121},
  doi        = {10.1016/j.na.2014.11.001},
  fjournal   = {Nonlinear Analysis. Theory, Methods \& Applications. An International Multidisciplinary Journal},
  mrclass    = {49N60 (35J20)},
  mrnumber   = {3348922},
  mrreviewer = {Niko M. Marola},
  url        = {https://doi.org/10.1016/j.na.2014.11.001},
}

@Article{Mingione2021,
  author     = {Mingione, Giuseppe and R\v{a}dulescu, Vicen\c{t}iu},
  journal    = {J. Math. Anal. Appl.},
  title      = {Recent developments in problems with nonstandard growth and nonuniform ellipticity},
  year       = {2021},
  issn       = {0022-247X},
  number     = {1},
  pages      = {Paper No. 125197, 41},
  volume     = {501},
  doi        = {10.1016/j.jmaa.2021.125197},
  fjournal   = {Journal of Mathematical Analysis and Applications},
  mrclass    = {49-02 (35J50 49J10 49N60)},
  mrnumber   = {4258810},
  mrreviewer = {Carlo Mariconda},
  url        = {https://doi.org/10.1016/j.jmaa.2021.125197},
}

@Article{Zhikov1995,
  author     = {Zhikov, Vasili\u{\i} V.},
  journal    = {Russian J. Math. Phys.},
  title      = {On {L}avrentiev's phenomenon},
  year       = {1995},
  issn       = {1061-9208},
  number     = {2},
  pages      = {249--269},
  volume     = {3},
  fjournal   = {Russian Journal of Mathematical Physics},
  mrclass    = {49J45 (49J10)},
  mrnumber   = {1350506},
  mrreviewer = {Philip D. Loewen},
}

@Article{Zhikov1986,
  author     = {Zhikov, V. V.},
  journal    = {Izv. Akad. Nauk SSSR Ser. Mat.},
  title      = {Averaging of functionals of the calculus of variations and elasticity theory},
  year       = {1986},
  issn       = {0373-2436},
  number     = {4},
  pages      = {675--710, 877},
  volume     = {50},
  fjournal   = {Izvestiya Akademii Nauk SSSR. Seriya Matematicheskaya},
  mrclass    = {49H05 (73C60)},
  mrnumber   = {864171},
  mrreviewer = {Vadim Komkov},
}

@Article{Zhikov1997,
  author     = {Zhikov, Vasili\u{\i} V.},
  journal    = {Russian J. Math. Phys.},
  title      = {On some variational problems},
  year       = {1997},
  issn       = {1061-9208},
  number     = {1},
  pages      = {105--116 (1998)},
  volume     = {5},
  fjournal   = {Russian Journal of Mathematical Physics},
  mrclass    = {49J10 (49J45 49N60)},
  mrnumber   = {1486765},
  mrreviewer = {Francesco Ferro},
}

@Article{Zhikov1993,
  author     = {Zhikov, Vasili\u{\i} V.},
  journal    = {C. R. Acad. Sci. Paris S\'{e}r. I Math.},
  title      = {Lavrentiev phenomenon and homogenization for some variational problems},
  year       = {1993},
  issn       = {0764-4442},
  number     = {5},
  pages      = {435--439},
  volume     = {316},
  fjournal   = {Comptes Rendus de l'Acad\'{e}mie des Sciences. S\'{e}rie I. Math\'{e}matique},
  mrclass    = {49J45 (35B27 73B27)},
  mrnumber   = {1209262},
  mrreviewer = {J. Saint Jean Paulin},
}

@Article{Haestoe2022,
  author   = {H\"{a}st\"{o}, Peter and Ok, Jihoon},
  journal  = {Arch. Ration. Mech. Anal.},
  title    = {Regularity theory for non-autonomous partial differential equations without {U}hlenbeck structure},
  year     = {2022},
  issn     = {0003-9527},
  number   = {3},
  pages    = {1401--1436},
  volume   = {245},
  doi      = {10.1007/s00205-022-01807-y},
  fjournal = {Archive for Rational Mechanics and Analysis},
  mrclass  = {49N60 (35D30 49J10)},
  mrnumber = {4467321},
  url      = {https://doi.org/10.1007/s00205-022-01807-y},
}

@Article{Ok2020,
  author     = {Ok, Jihoon},
  journal    = {Nonlinear Anal.},
  title      = {Regularity for double phase problems under additional integrability assumptions},
  year       = {2020},
  issn       = {0362-546X},
  pages      = {111408, 13},
  volume     = {194},
  doi        = {10.1016/j.na.2018.12.019},
  fjournal   = {Nonlinear Analysis. Theory, Methods \& Applications. An International Multidisciplinary Journal},
  mrclass    = {49N60 (35D30 35J62 35J70)},
  mrnumber   = {4074606},
  mrreviewer = {Elvira Mascolo},
  url        = {https://doi.org/10.1016/j.na.2018.12.019},
}

@Article{Haestoe2022a,
  author     = {H\"{a}st\"{o}, Peter and Ok, Jihoon},
  journal    = {J. Eur. Math. Soc. (JEMS)},
  title      = {Maximal regularity for local minimizers of non-autonomous functionals},
  year       = {2022},
  issn       = {1435-9855},
  number     = {4},
  pages      = {1285--1334},
  volume     = {24},
  doi        = {10.4171/JEMS/1118},
  fjournal   = {Journal of the European Mathematical Society (JEMS)},
  mrclass    = {49N60 (35A15 35J62 46E30 49J10)},
  mrnumber   = {4397041},
  mrreviewer = {Antonia Passarelli di Napoli},
  url        = {https://doi.org/10.4171/JEMS/1118},
}

@Article{Byun2021,
  author   = {Byun, Sun-Sig and Lee, Ho-Sik},
  journal  = {Q. J. Math.},
  title    = {Gradient estimates of {$\omega$}-minimizers to double phase variational problems with variable exponents},
  year     = {2021},
  issn     = {0033-5606},
  number   = {4},
  pages    = {1191--1221},
  volume   = {72},
  doi      = {10.1093/qmath/haaa067},
  fjournal = {The Quarterly Journal of Mathematics},
  mrclass  = {35A15},
  mrnumber = {4350146},
  url      = {https://doi.org/10.1093/qmath/haaa067},
}

@Article{Byun2021a,
  author   = {Byun, Sun-Sig and Lee, Ho-Sik},
  journal  = {J. Math. Anal. Appl.},
  title    = {Calder\'{o}n-{Z}ygmund estimates for elliptic double phase problems with variable exponents},
  year     = {2021},
  issn     = {0022-247X},
  number   = {1},
  pages    = {Paper No. 124015, 31},
  volume   = {501},
  doi      = {10.1016/j.jmaa.2020.124015},
  fjournal = {Journal of Mathematical Analysis and Applications},
  mrclass  = {35J30},
  mrnumber = {4258791},
  url      = {https://doi.org/10.1016/j.jmaa.2020.124015},
}

@Article{Marcellini2020,
  author   = {Marcellini, Paolo},
  journal  = {Nonlinear Anal.},
  title    = {A variational approach to parabolic equations under general and {$p,q$}-growth conditions},
  year     = {2020},
  issn     = {0362-546X},
  pages    = {111456, 17},
  volume   = {194},
  doi      = {10.1016/j.na.2019.02.010},
  fjournal = {Nonlinear Analysis. Theory, Methods \& Applications. An International Multidisciplinary Journal},
  mrclass  = {35K59 (35B65 35K20 49N60)},
  mrnumber = {4074614},
  url      = {https://doi.org/10.1016/j.na.2019.02.010},
}

@Article{Colombo2015a,
  author     = {Colombo, Maria and Mingione, Giuseppe},
  journal    = {Arch. Ration. Mech. Anal.},
  title      = {Bounded minimisers of double phase variational integrals},
  year       = {2015},
  issn       = {0003-9527},
  number     = {1},
  pages      = {219--273},
  volume     = {218},
  doi        = {10.1007/s00205-015-0859-9},
  fjournal   = {Archive for Rational Mechanics and Analysis},
  mrclass    = {49J10 (49N60)},
  mrnumber   = {3360738},
  mrreviewer = {Helmut Kaul},
  url        = {https://doi.org/10.1007/s00205-015-0859-9},
}

@Article{Esposito2004,
  author     = {Esposito, Luca and Leonetti, Francesco and Mingione, Giuseppe},
  journal    = {J. Differential Equations},
  title      = {Sharp regularity for functionals with {$(p,q)$} growth},
  year       = {2004},
  issn       = {0022-0396},
  number     = {1},
  pages      = {5--55},
  volume     = {204},
  doi        = {10.1016/j.jde.2003.11.007},
  fjournal   = {Journal of Differential Equations},
  mrclass    = {49J10 (49N60)},
  mrnumber   = {2076158},
  mrreviewer = {Delfim F. M. Torres},
  url        = {https://doi.org/10.1016/j.jde.2003.11.007},
}

@Article{Fonseca2004,
  author     = {Fonseca, Irene and Mal\'{y}, Jan and Mingione, Giuseppe},
  journal    = {Arch. Ration. Mech. Anal.},
  title      = {Scalar minimizers with fractal singular sets},
  year       = {2004},
  issn       = {0003-9527},
  number     = {2},
  pages      = {295--307},
  volume     = {172},
  doi        = {10.1007/s00205-003-0301-6},
  fjournal   = {Archive for Rational Mechanics and Analysis},
  mrclass    = {49J10},
  mrnumber   = {2058167},
  mrreviewer = {Elvira Mascolo},
  url        = {https://doi.org/10.1007/s00205-003-0301-6},
}

@Article{Baasandorj2020,
  author     = {Baasandorj, Sumiya and Byun, Sun-Sig and Oh, Jehan},
  journal    = {J. Funct. Anal.},
  title      = {Calder\'{o}n-{Z}ygmund estimates for generalized double phase problems},
  year       = {2020},
  issn       = {0022-1236},
  number     = {7},
  pages      = {108670, 57},
  volume     = {279},
  doi        = {10.1016/j.jfa.2020.108670},
  fjournal   = {Journal of Functional Analysis},
  mrclass    = {35J70 (35B65 42B25 46E35)},
  mrnumber   = {4107816},
  mrreviewer = {Juha K. Kinnunen},
  url        = {https://doi.org/10.1016/j.jfa.2020.108670},
}

@Article{Byun2020,
  author     = {Byun, Sun-Sig and Oh, Jehan},
  journal    = {Anal. PDE},
  title      = {Regularity results for generalized double phase functionals},
  year       = {2020},
  issn       = {2157-5045},
  number     = {5},
  pages      = {1269--1300},
  volume     = {13},
  doi        = {10.2140/apde.2020.13.1269},
  fjournal   = {Analysis \& PDE},
  mrclass    = {49N60 (35B65 35J20 49J10)},
  mrnumber   = {4149062},
  mrreviewer = {Elvira Mascolo},
  url        = {https://doi.org/10.2140/apde.2020.13.1269},
}

@Article{Byun2017,
  author   = {Byun, Sun-Sig and Oh, Jehan},
  journal  = {Calc. Var. Partial Differential Equations},
  title    = {Global gradient estimates for non-uniformly elliptic equations},
  year     = {2017},
  issn     = {0944-2669},
  number   = {2},
  pages    = {Paper No. 46, 36},
  volume   = {56},
  doi      = {10.1007/s00526-017-1148-2},
  fjournal = {Calculus of Variations and Partial Differential Equations},
  mrclass  = {35J62 (35B65 35J70)},
  mrnumber = {3624942},
  url      = {https://doi.org/10.1007/s00526-017-1148-2},
}

@Article{Marcellini1989,
  author  = {Marcellini, Paolo},
  journal = {Archive for Rational Mechanics and Analysis},
  title   = {Regularity of Minimizers of Integrals of the Calculus of Variations with Non–Standard Growth Conditions},
  year    = {1989},
  month   = {01},
  pages   = {267-284},
  volume  = {105},
  doi     = {10.1007/BF00251503},
}

@Article{Colombo2016,
  author     = {Colombo, Maria and Mingione, Giuseppe},
  journal    = {J. Funct. Anal.},
  title      = {Calder\'{o}n-{Z}ygmund estimates and non-uniformly elliptic operators},
  year       = {2016},
  issn       = {0022-1236,1096-0783},
  number     = {4},
  pages      = {1416--1478},
  volume     = {270},
  doi        = {10.1016/j.jfa.2015.06.022},
  fjournal   = {Journal of Functional Analysis},
  mrclass    = {35J62 (35B65 35J70)},
  mrnumber   = {3447716},
  mrreviewer = {Francesco\ Della Pietra},
  url        = {https://doi.org/10.1016/j.jfa.2015.06.022},
}

@Article{Chlebicka2019a,
  author     = {Chlebicka, Iwona and Gwiazda, Piotr and Zatorska-Goldstein, Anna},
  journal    = {Ann. Inst. H. Poincar\'{e} C Anal. Non Lin\'{e}aire},
  title      = {Parabolic equation in time and space dependent anisotropic {M}usielak-{O}rlicz spaces in absence of {L}avrentiev's phenomenon},
  year       = {2019},
  issn       = {0294-1449,1873-1430},
  number     = {5},
  pages      = {1431--1465},
  volume     = {36},
  doi        = {10.1016/j.anihpc.2019.01.003},
  fjournal   = {Annales de l'Institut Henri Poincar\'{e} C. Analyse Non Lin\'{e}aire},
  mrclass    = {35K59 (35A01 35K20)},
  mrnumber   = {3985549},
  mrreviewer = {Daniele\ Andreucci},
  url        = {https://doi.org/10.1016/j.anihpc.2019.01.003},
}

@Article{Wontae2024,
  author   = {Kim, Wontae and S\"arki\"o, Lauri},
  journal  = {NoDEA Nonlinear Differential Equations Appl.},
  title    = {Gradient higher integrability for singular parabolic double-phase systems},
  year     = {2024},
  issn     = {1021-9722,1420-9004},
  number   = {3},
  pages    = {Paper No. 40, 38},
  volume   = {31},
  doi      = {10.1007/s00030-024-00928-5},
  fjournal = {NoDEA. Nonlinear Differential Equations and Applications},
  mrclass  = {35D30 (35K55 35K65)},
  mrnumber = {4718687},
  url      = {https://doi.org/10.1007/s00030-024-00928-5},
}

@Article{Kim2025b,
  author  = {Kim, Bogi and Oh, Jehan},
  journal = {arXiv preprint arXiv:2511.13454},
  title   = {Bounded solutions and interpolative gap bounds for degenerate parabolic double phase problems},
  year    = {2025},
}

@Article{Kim2026,
  author   = {Kim, Bogi and Kim, Youngchae and Oh, Jehan},
  journal  = {NoDEA Nonlinear Differential Equations Appl.},
  title    = {Gradient estimates for double phase problems with two modulating coefficients},
  year     = {2026},
  issn     = {1021-9722,1420-9004},
  number   = {1},
  pages    = {Paper No. 12},
  volume   = {33},
  doi      = {10.1007/s00030-025-01162-3},
  fjournal = {NoDEA. Nonlinear Differential Equations and Applications},
  mrclass  = {35B65 (35A15 35D30 35J70)},
  mrnumber = {4984383},
  url      = {https://doi.org/10.1007/s00030-025-01162-3},
}

@Article{Kim2025a,
  author   = {Kim, Bogi and Oh, Jehan},
  journal  = {Adv. Nonlinear Anal.},
  title    = {Regularity for double-phase functionals with nearly linear growth and two modulating coefficients},
  year     = {2025},
  issn     = {2191-9496,2191-950X},
  number   = {1},
  pages    = {Paper No. 20250090, 19},
  volume   = {14},
  doi      = {10.1515/anona-2025-0090},
  fjournal = {Advances in Nonlinear Analysis},
  mrclass  = {35B65 (35A15 35J70 49J10 49N60)},
  mrnumber = {4926894},
  url      = {https://doi.org/10.1515/anona-2025-0090},
}

@Article{Kim2024a,
  author     = {Kim, Bogi and Oh, Jehan},
  journal    = {J. Geom. Anal.},
  title      = {Regularity for double phase functionals with two modulating coefficients},
  year       = {2024},
  issn       = {1050-6926,1559-002X},
  number     = {5},
  pages      = {Paper No. 134, 51},
  volume     = {34},
  doi        = {10.1007/s12220-024-01584-y},
  fjournal   = {Journal of Geometric Analysis},
  mrclass    = {35B65 (35J70)},
  mrnumber   = {4719982},
  mrreviewer = {Marcos\ Montenegro},
  url        = {https://doi.org/10.1007/s12220-024-01584-y},
}

@Article{Hasto2025,
  author  = {H{\"a}st{\"o}, Peter and Ok, Jihoon},
  journal = {arXiv preprint arXiv:2511.19758},
  title   = {Higher integrability for parabolic PDEs with generalized Orlicz growth},
  year    = {2025},
}

@Article{BYJ2026,
  author  = {Kim, Bogi and Kim, Youngchae and Oh, Jehan},
  journal = {arXiv e-prints},
  title   = {Absence of the Lavrentiev phenomenon for degenerate parabolic double phase problems},
  year    = {2026},
  pages   = {arXiv--2603},
}

@Article{Maligranda2004,
  author     = {Maligranda, L.},
  journal    = {Acta Math. Hungar.},
  title      = {Calder\'on-{L}ozanovski\u i\ construction for mixed norm spaces},
  year       = {2004},
  issn       = {0236-5294,1588-2632},
  number     = {4},
  pages      = {279--302},
  volume     = {103},
  doi        = {10.1023/B:AMHU.0000028829.15720.02},
  fjournal   = {Acta Mathematica Hungarica},
  mrclass    = {46E30 (46B42 46B70)},
  mrnumber   = {2062631},
  mrreviewer = {Fernando\ Cobos},
  url        = {https://doi.org/10.1023/B:AMHU.0000028829.15720.02},
}

@Article{Chlebicka2019b,
  author     = {Chlebicka, Iwona and Gwiazda, Piotr and Zatorska-Goldstein, Anna},
  journal    = {J. Differential Equations},
  title      = {Renormalized solutions to parabolic equations in time and space dependent anisotropic {M}usielak-{O}rlicz spaces in absence of {L}avrentiev's phenomenon},
  year       = {2019},
  issn       = {0022-0396,1090-2732},
  number     = {2},
  pages      = {1129--1166},
  volume     = {267},
  doi        = {10.1016/j.jde.2019.02.005},
  fjournal   = {Journal of Differential Equations},
  mrclass    = {35K59 (35A01 35R05)},
  mrnumber   = {3957983},
  mrreviewer = {Piotr\ Rybka},
  url        = {https://doi.org/10.1016/j.jde.2019.02.005},
}

@Article{Bulicek2022,
  author   = {Bul\'{i}\v{c}ek, Miroslav and Gwiazda, Piotr and Skrzeczkowski, Jakub},
  journal  = {Arch. Ration. Mech. Anal.},
  title    = {On a range of exponents for absence of {L}avrentiev phenomenon for double phase functionals},
  year     = {2022},
  issn     = {0003-9527,1432-0673},
  number   = {1},
  pages    = {209--240},
  volume   = {246},
  doi      = {10.1007/s00205-022-01816-x},
  fjournal = {Archive for Rational Mechanics and Analysis},
  mrclass  = {49J10 (49J45)},
  mrnumber = {4487513},
  url      = {https://doi.org/10.1007/s00205-022-01816-x},
}

@Article{DeFilippis2019,
  author   = {De Filippis, C. and Mingione, G.},
  journal  = {Algebra i Analiz},
  title    = {A borderline case of {C}alder\'on-{Z}ygmund estimates for nonuniformly elliptic problems},
  year     = {2019},
  issn     = {0234-0852},
  number   = {3},
  pages    = {82--115},
  volume   = {31},
  doi      = {10.1090/spmj/1608},
  fjournal = {Rossi\u iskaya Akademiya Nauk. Algebra i Analiz},
  mrclass  = {35J70 (35B27 35B45 35J62 35J92 49J10 49N60)},
  mrnumber = {3985927},
  url      = {https://doi.org/10.1090/spmj/1608},
}

@Article{DeFilippis2023,
  author     = {De Filippis, Cristiana and Mingione, Giuseppe},
  journal    = {Arch. Ration. Mech. Anal.},
  title      = {Regularity for double phase problems at nearly linear growth},
  year       = {2023},
  issn       = {0003-9527,1432-0673},
  number     = {5},
  pages      = {Paper No. 85, 50},
  volume     = {247},
  doi        = {10.1007/s00205-023-01907-3},
  fjournal   = {Archive for Rational Mechanics and Analysis},
  mrclass    = {49J10 (35Q49 49N60)},
  mrnumber   = {4630451},
  mrreviewer = {Antonio\ Leaci},
  url        = {https://doi.org/10.1007/s00205-023-01907-3},
}

@Book{Chlebicka2021,
  author    = {Chlebicka, Iwona and Gwiazda, Piotr and \'Swierczewska-Gwiazda, Agnieszka and Wr\'oblewska-Kami\'nska, Aneta},
  publisher = {Springer, Cham},
  title     = {Partial differential equations in anisotropic {M}usielak-{O}rlicz spaces},
  year      = {[2021] \copyright 2021},
  isbn      = {978-3-030-88855-8; 978-3-030-88856-5},
  series    = {Springer Monographs in Mathematics},
  doi       = {10.1007/978-3-030-88856-5},
  mrclass   = {35-02 (35A01 46E30 76D03)},
  mrnumber  = {4357588},
  pages     = {xiii+389},
  url       = {https://doi.org/10.1007/978-3-030-88856-5},
}

@Article{Chlebicka2025a,
  author  = {Chlebicka, Iwona and Garain, Prashanta and Kim, Wontae},
  journal = {arXiv preprint arXiv:2512.11294},
  title   = {Gradient higher integrability of bounded solutions to parabolic double-phase systems},
  year    = {2025},
}

@Article{Kim2026a,
  author  = {Kim, Bogi and Oh, Jehan},
  journal = {arXiv preprint arXiv:2601.01571},
  title   = {Interpolative Refinement of Gap Bound Conditions for Singular Parabolic Double Phase Problems},
  year    = {2026},
}

@Article{Marcellini1991,
  author     = {Marcellini, Paolo},
  journal    = {J. Differential Equations},
  title      = {Regularity and existence of solutions of elliptic equations with {$p,q$}-growth conditions},
  year       = {1991},
  issn       = {0022-0396,1090-2732},
  number     = {1},
  pages      = {1--30},
  volume     = {90},
  doi        = {10.1016/0022-0396(91)90158-6},
  fjournal   = {Journal of Differential Equations},
  mrclass    = {35J15 (35D10)},
  mrnumber   = {1094446},
  mrreviewer = {Philip\ W.\ Schaefer},
  url        = {https://doi.org/10.1016/0022-0396(91)90158-6},
}
\end{document}